\numberwithin{equation}{section}
\theoremstyle{plain}
\newtheorem{thm}{Theorem}[section]
\newtheorem{theorem}[thm]{Theorem}
\theoremstyle{definition}
\newtheorem{rem}[thm]{Remark}
\newtheorem{remark[thm]}{Remark}
\def\Z{{\mathbb Z}}
\def\R{{\mathbb R}}
\def\1{\hbox{\rm\rlap {1}\hskip.03in{\rom I}}}
\def\Bbbone{{\rm1\mathchoice{\kern-0.25em}{\kern-0.25em}
{\kern-0.2em}{\kern-0.2em}I}}
\long\def\forget#1\forgotten{} %
\begin{document}
\title[On Levin's generalization of the plus construction]
{On Levin's generalization of the plus construction}

\author[A.~Dranishnikov]
{Alexander Dranishnikov}
\address{A. Dranishnikov, Department of Mathematics, University
of Florida, 358 Little Hall, Gainesville, FL 32611-8105, USA}
\email{dranish@math.ufl.edu}

\maketitle

The plus construction was introduced by Kervaire~\cite{Ke} and Quillen~\cite{Q}. In a simple version it states the following:

{\bf Plus Construction Theorem.}~\cite{Ha} {\em Every connected CW complex $K$ with the perfect fundamental group $\pi_1(K)$ is contained in a simply connected
CW complex $K^+$ such that inclusion homomorphisms $H_i(K)\to H_i(K^+)$ are isomorphisms for all $i$. Moreover, $K^+$ can be obtained from $K$ by attaching cells of dimensions 2 and 3.}

\

Recently Michael Levin found a nice elementary generalization of the Plus Construction Theorem (Proposition 4.4~\cite{Le}). 
As far as I know his generalization was not known before. Since many algebraic topologists refused to believe in this remarkable fact, I decided to write a promotion note.

\

\begin{theorem} [M. Levin] For every connected CW complex $K$ there is a simply connected CW complex $K^+$ obtained from $K$
by attaching cells of dimension 2 and 3 such that the inclusion $K\to K^+$ induces isomorphisms of homology groups in dimension $>1$.
\end{theorem}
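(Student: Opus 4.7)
The conclusion requires an isomorphism only in dimensions $>1$, so in contrast with the classical plus construction we are free to kill all of $\pi_1(K)$ rather than only its maximal perfect subgroup. Fix a (possibly infinite) generating set $\{g_\alpha\}_{\alpha\in A}$ of $G:=\pi_1(K)$, and for each $\alpha$ attach a $2$-cell to $K$ along a loop representing $g_\alpha$. Let $K'$ be the resulting complex; by van Kampen it is simply connected.

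The long exact sequence of the pair $(K',K)$ has relative groups concentrated in dimension~$2$, with $H_2(K',K)\cong F$ the free abelian group on $\{g_\alpha\}$. One obtains $H_n(K)\cong H_n(K')$ for $n\geq 3$ together with the exact sequence
\[
0\to H_2(K)\to H_2(K')\xrightarrow{j_*} F\xrightarrow{\partial} H_1(K)\to H_1(K')\to 0,
\]
in which $\partial$ sends the $\alpha$-generator to $[g_\alpha]\in G^{\mathrm{ab}}=H_1(K)$. Because the $g_\alpha$ generate $G$, $\partial$ is surjective; thus $H_1(K')=0$, and $F':=\ker\partial=\operatorname{Im} j_*$ is free abelian as a subgroup of $F$. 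Consequently the extension $0\to H_2(K)\to H_2(K')\to F'\to 0$ splits, giving $H_2(K')\cong H_2(K)\oplus F'$.

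Choose a basis $\{z_\beta\}$ of $F'$ and let $\tilde z_\beta\in H_2(K')$ be the corresponding lifts provided by the splitting. Since $K'$ is simply connected, Hurewicz represents each $\tilde z_\beta$ by a map $\phi_\beta\colon S^2\to K'$; let $K^+$ be obtained from $K'$ by attaching a $3$-cell along $\phi_\beta$ for every $\beta$. The long exact sequence of $(K^+,K')$ has relative groups concentrated in dimension~$3$, with $H_3(K^+,K')$ free abelian on the new $3$-cells and connecting map sending the $\beta$-generator to $\tilde z_\beta$. By construction these images form a basis of the summand $F'\subset H_2(K')$, so the connecting map is injective. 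Therefore $H_3(K')\cong H_3(K^+)$, $H_2(K^+)\cong H_2(K')/F'\cong H_2(K)$, and $H_n(K^+)\cong H_n(K')$ for $n\geq 4$. Combining with the previous step, $H_n(K)\cong H_n(K^+)$ for all $n\geq 2$.

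I do not foresee a serious obstacle; the whole argument is two applications of the long exact sequence of a pair. The pivotal point is that the image of $H_2(K')$ in $F$ is free abelian, which simultaneously splits the $H_2$-extension and guarantees that attaching one $3$-cell for each basis element of $F'$ kills precisely the unwanted summand without creating any new $H_3$.
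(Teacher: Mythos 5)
Your proof is correct and follows essentially the same route as the paper: kill $\pi_1(K)$ with 2-cells, observe that the image of $H_2(K')$ in the free abelian group $H_2(K',K)\cong \tilde H_2(K'/K)$ is free abelian, and attach 3-cells along spheres representing lifts of a basis of that image. The only difference is cosmetic: you finish by splitting the extension $0\to H_2(K)\to H_2(K')\to F'\to 0$ and reading off the answer from the pair $(K^+,K')$, whereas the paper chooses the section at the level of $\pi_2$ and concludes with a diagram chase using the exact sequence of the triple $(K^+,K',K)$; both arguments hinge on the same freeness observation.
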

\begin{proof}
We attached 2-cells to $K$ to obtain a simply connected complex $K'$. Let $p:K'\to K'/K$ be the collapsing map.
Since $K'/K=\vee S^2$ is the wedge of spheres, the homoptopy group $\pi_2(K'/K)$ is free as well as its subgroup $p_{\#}(\pi_2(K'))=im(p_{\#})$.
We take a section $s:im(p_{\#})\to \pi_2(K')$ and fix a basis $\{[\phi_j]\}$ for $s(im(p_{\#}))\cong\oplus_J\Z$
where $\phi_j:S^2\to K'$. Then we attach
3-cells to $K'$ along the maps $\phi_j$ to obtain $K^+$. Note that $H_3(K^+,K')=\oplus_J\Z$ and in view of the Hurewicz isomorphism the through homomorphism
$$
\begin{CD}
H_3(K^+,K') @>\partial>> H_2(K') @>p_*>> H_2(K'/K)
\end{CD}
$$
is injective. It
takes $H_3(K^+,K')=\oplus_J\Z$ isomorphically onto the image $p_*(H_2(K'))=im(p_*)=\oplus_J\Z$. 

Note that the above composition $p_*\partial$ is the connecting homomorphism $d$ in the homology exact sequence
of the triple  $(K^+,K',K)$,
$$
\dots\to H_3(K',K) \to H_3(K^+,K) \to H_3(K^+,K') \stackrel{d}{\to} H_2(K',K) \stackrel{i}\to H_2(K^+,K)\to\dots
$$
Then the exact sequence of the triple and the obvious equality $H_3(K',K)=0$ imply that
$H_3(K^+,K)=0$. Since $H_i(K^+,K)=0$ for $i>3$ by dimensional reasons, the homology exact sequence of the pair 
$(K^+,K)$ 
$$
\dots \to H_{i+1}(K^+,K)\to H_i(K)\stackrel{i'}\to H_i(K^+)\stackrel{j'}\to H_i(K^+,K)\to\dots
$$
implies that the inclusion homomorphism $H_i(K)\to H_i(K^+)$ is an isomorphism for $i\ge 3$
and is a monomorphism for $i=2$.
Also the exact sequence of the triple $(K^+,K',K)$ implies that the inclusion homomorphism $i:H_2(K',K)\to H_2(K^+,K)$
takes $im(p_*)=im(d)$ to 0.
Then the commutative diagram
$$
\begin{CD}
H_2(K^+) @>j'>> H_2(K^+,K)\\
@Ai'AA @AiAA\\
H_2(K') @>p_*>> H_2(K',K)\\
\end{CD}
$$
and the fact that $i'$ is surjective for 2-dimensional homology imply that $j'=0$. Therefore, $H_2(K)\to H_2(K^+)$ is an epimorphism and, hence, an isomorphism. 
\end{proof}

\begin{rem}
If $\pi_1(K)$ is perfect, hence $H_1(K)=0$ and Levin's plus construction implies the classical one.
\end{rem}

\begin{rem}
The original Levin's proof was presented on the chain level. Here we gave the proof in the diagram chasing language which is perhaps more appealing to algebraic topologists.
\end{rem}

\begin{rem}
The original plus construction in the full generality kills a given perfect normal subgroup of $\pi_1(K)$ preserving homology groups. We note that a subgroup version of the plus construction does not admit Levin's type generalization to an arbitrary normal subgroup. As it was noticed in~\cite{Ha} the commutator subgroup of the fundamental group of
$K=S^1\vee S^1$ cannot be killed without creating a new 2-dimensional homology.
\end{rem}

\end{document}